\documentclass[12pt,reqno]{amsart}
\usepackage[utf8]{inputenc}
\usepackage[margin=1.5 in]{geometry}
\usepackage[lite]{amsrefs}
\usepackage{amsfonts}
\usepackage{mathrsfs}
\usepackage{amscd}

\usepackage{physics}
\usepackage{amsmath}
\usepackage{mathtools}
\usepackage{amssymb}
\usepackage{multicol}
\usepackage{graphicx}
\usepackage{wrapfig}
\usepackage{float}
\usepackage[english]{babel}
\usepackage[utf8]{inputenc}
\usepackage{fancyhdr}
\usepackage{amsthm}
\usepackage[all]{xy}
\usepackage{tikz-cd}
\usepackage{bbold}
\usepackage{hyperref}
\usepackage{tabularx,booktabs,caption,ragged2e}

\newcolumntype{L}{>{\RaggedRight\arraybackslash}X}
\newtheorem{thm}{Theorem}[section]
\newtheorem{cor}[thm]{Corollary}
\newtheorem{lemma}[thm]{Lemma}
\newtheorem{propn}[thm]{Proposition}

\newtheorem*{question}{Question}

\theoremstyle{definition}
\newtheorem{defn}[thm]{Definition}

\theoremstyle{remark}
\newtheorem{remark}[thm]{Remark}

\newcommand{\R}{\mathbb{R}}

\newcommand{\E}{\mathcal{E}}

\title{Finite-Rank Lie Algebroids for Singular Foliations of Prescribed Vanishing Order}

\author{Austin Davis}
\address{Department of Mathematics, Penn State University}
\email{\href{mailto:austinhuntdavis@psu.edu}{austinhuntdavis@psu.edu}}

\author{Cheng-Yi Hung}
\address{Department of Mathematics, National Tsing Hua University}
\email{\href{mailto:chengyi@m113.nthu.edu.tw}{chengyi@m113.nthu.edu.tw}}

\author{Sangjun Ko}
\address{Department of Mathematics, Penn State University}
\email{\href{mailto:duck@psu.edu}{duck@psu.edu}}

\author{Kaichuan Qi}
\address{Department of Mathematics, Penn State University}
\email{\href{mailto:kaichuan@psu.edu}{kaichuan@psu.edu}}

\begin{document}

\begin{abstract}
We construct a family of finite-rank Lie algebroids realizing, through
their anchor maps, the singular foliations of vector fields vanishing
to a prescribed order at the origin. The construction is obtained from
the sub-adjacent Lie algebroids of a class of left-symmetric
algebroids.
\end{abstract}

\maketitle
\section{Introduction}

It is a general open problem to determine which singular foliations
can be realized as the image sheaf of the anchor of a finite-rank Lie
algebroid. This problem was explicitly raised by
Androulidakis and Zambon \cite{AZ} and has since been highlighted in the
recent account of Laurent-Gengoux, Louis, and Ryvkin \cite{LGLR}. In this
note, we give a construction of finite-rank Lie algebroids
realizing a natural family of singular foliations on finite-dimensional
real vector spaces, consisting of vector fields with a prescribed order
of vanishing at the origin.

Let \(V=\mathbb{R}^n\), with standard coordinates
\(x_1,\ldots,x_n\), and fix an integer \(k\geq 1\).
Denote by \(C^\infty_V\) the sheaf of smooth functions on \(V\),
by \(\mathfrak X_V\) the sheaf of smooth vector fields on \(V\),
and by
\[
\mathcal I_0\subset C^\infty_V
\]
the ideal sheaf of the origin. Consider the singular foliation
\[
\mathcal F_k
=
\mathcal I_0^k\mathfrak X_V.
\]

For a multi-index
\(
\alpha=(\alpha_1,\ldots,\alpha_n)\in\mathbb{N}^n,
\)
write
\[
|\alpha|
=
\alpha_1+\cdots+\alpha_n,
\qquad
x^\alpha
=
x_1^{\alpha_1}\cdots x_n^{\alpha_n}.
\]
By the iterated Hadamard lemma, the ideal sheaf
\(\mathcal I_0^k\) is generated, as a \(C^\infty_V\)-module, by the
monomials
\[
x^\alpha,
\qquad
|\alpha|=k.
\]
Consequently, \(\mathcal F_k\) is globally generated, as a sheaf of
\(C^\infty_V\)-modules, by
\[
x^\alpha\frac{\partial}{\partial x_j},
\qquad
|\alpha|=k,
\qquad
1\leq j\leq n.
\]

\begin{question}
Is this singular foliation the image of the anchor of a
finite-rank Lie algebroid?
\end{question}

For \(k=1\), such a realization is provided by the standard action
Lie algebroid associated with the linear action of
\(\mathfrak{gl}(V)\) on \(V\). The cases \(n\geq 2\) and \(k\geq 2\)
are more subtle. In recent work, Louis \cite{Louis} considered the
same natural presentation of \(\mathcal F_k\) by a finite-rank
anchored vector bundle and constructed an almost Lie algebroid
structure on it, while noting that for \(k\neq 1\) it remained open whether $\mathcal{F}_k$ is the anchor image of a Lie algebroid.

We answer this question positively. More precisely, we construct a
family of finite-rank Lie algebroids whose anchor image is precisely
the singular foliation above. For the special case of \(n=k=2\), this
gives a positive answer to Question 1.8 of \cite{LGLR}. It also gives
a positive answer to the equivalent linear Poisson extension problem
raised by Lerman and Louis \cite{LL}.

Consider the trivial vector bundle
\[
A
=
V\times\bigl(S^kV^*\otimes V\bigr)
\longrightarrow V.
\]
Its rank is
\[
\operatorname{rank}(A)
=
n\binom{n+k-1}{k}.
\]
A section \(a\in\Gamma(A)\) is a smooth family
\(
x\longmapsto a_x,
\)
where
\[
a_x\colon V^k\longrightarrow V
\]
is a symmetric \(k\)-linear map. We first construct an anchor map.

Let \(\E\in\mathfrak X(V)\) be the Euler vector field. There is a natural
anchor
\[
\rho\colon A\longrightarrow TV
\]
defined on sections by
\[
\rho(a)
=
a(\underbrace{\E,\ldots,\E}_{k\text{ times}}),
\quad\text{equivalently,}\quad
\rho(a)_x=a_x(x,\ldots,x).
\]

For a multi-index \(\alpha\in\mathbb{N}^n\) with \(|\alpha|=k\), let
\[
dx^{\odot\alpha}
=
dx_1^{\odot\alpha_1}
\odot\cdots\odot
dx_n^{\odot\alpha_n}
\in S^kV^*
\]
denote the normalized symmetric tensor satisfying
\(
dx^{\odot\alpha}(x,\ldots,x)=x^\alpha.
\)
Then
\[
\rho\left(
dx^{\odot\alpha}\otimes
\frac{\partial}{\partial x_j}
\right)
=
x^\alpha\frac{\partial}{\partial x_j}.
\]
It follows that, for every open subset \(U\subseteq V\),
\[
\rho\bigl(\Gamma(U,A)\bigr)
=
\mathcal F_k(U).
\]

It remains to construct a Lie algebroid bracket on \(A\) compatible
with this anchor. We make use of the notion of left-symmetric
algebroids \cite{LSBC}, and construct a non-skew-symmetric product
\[
\triangleright\colon
\Gamma(A)\times\Gamma(A)
\longrightarrow\Gamma(A).
\]
We show that it defines a left-symmetric algebroid structure. The
associated sub-adjacent Lie algebroid has bracket
\[
[a,b]_A
=
a\triangleright b-b\triangleright a.
\]

The guiding requirement in the construction is the compatibility
condition
\[
\rho(a\triangleright b)
=
D_{\rho(a)}\rho(b),
\]
where \(D\) is the standard flat connection on \(V\). The
left-symmetric product is given by
\[
\begin{aligned}
(a\triangleright b)(u_1,\ldots,u_k)
={}&
(D_{\rho(a)}b)(u_1,\ldots,u_k)
\\
&+
\sum_{i=1}^k
b(u_1,\ldots,u_{i-1},
\Lambda_a u_i,
u_{i+1},\ldots,u_k),
\end{aligned}
\]
where
\[
\Lambda_a(u)
=
a(\underbrace{\E,\ldots,\E}_{k-1\text{ times}},u).
\]

\section{A class of left-symmetric algebroids}

\subsection{Construction of the left-symmetric algebroid}
We first recall basics of left-symmetric algebroids, following Liu, Sheng, Bai, and Chen \cite{LSBC}.
\begin{defn}
Let \(A\to M\) be an anchored vector bundle with anchor
\(\rho\colon A\to TM\). A \emph{left-symmetric algebroid structure}
on \(A\) is an \(\R\)-bilinear product
\[
\triangleright\colon
\Gamma(A)\times\Gamma(A)
\longrightarrow\Gamma(A)
\]
such that, for $f \in C^{\infty}(M)$ and $a,b,c \in \Gamma(A)$,
\begin{align}
(fa)\triangleright b
&=
f(a\triangleright b),
\label{eq:LSA-first-Leibniz}
\\
a\triangleright(fb)
&=
f(a\triangleright b)+\rho(a)[f]\,b,
\label{eq:LSA-second-Leibniz}
\end{align}
and whose associator
\begin{equation}
\operatorname{Ass}(a,b,c)
=
(a\triangleright b)\triangleright c
-
a\triangleright(b\triangleright c)
\label{eq:associator-definition}
\end{equation}
is symmetric in its first two arguments:
\begin{equation}
\operatorname{Ass}(a,b,c)
=
\operatorname{Ass}(b,a,c).
\label{eq:left-symmetric-identity}
\end{equation}
\end{defn}

\begin{propn}
Assume $(A,\rho,\triangleright)$ is a left-symmetric algebroid. Define
\[
[a,b]_A
=
a\triangleright b-b\triangleright a.
\]
Then
\[
\bigl(A,\rho,[\,\cdot,\cdot\,]_A\bigr)
\]
is a Lie algebroid.
\end{propn}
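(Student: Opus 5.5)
We verify the Lie algebroid axioms directly from the two Leibniz rules \eqref{eq:LSA-first-Leibniz}, \eqref{eq:LSA-second-Leibniz} and the left-symmetric identity \eqref{eq:left-symmetric-identity}. The bracket \([a,b]_A=a\triangleright b-b\triangleright a\) is visibly \(\R\)-bilinear and skew-symmetric. So three things remain: the Leibniz rule for the bracket, the Jacobi identity, and (if one wants it recorded) that \(\rho\) is a bracket homomorphism.

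\emph{Leibniz rule.} For \(f\in C^\infty(M)\) we combine the two Leibniz rules:
\[
[a,fb]_A=a\triangleright(fb)-(fb)\triangleright a
=f(a\triangleright b)+\rho(a)[f]\,b-f(b\triangleright a)
=f[a,b]_A+\rho(a)[f]\,b .
\]
This is exactly the anchored Leibniz rule.

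\emph{Jacobi identity.} This is the standard computation for left-symmetric (pre-Lie) algebras, and it uses only \(\R\)-bilinearity, so no function terms enter. Expanding \([[a,b]_A,c]_A\) gives
\[
[[a,b]_A,c]_A=(a\triangleright b)\triangleright c-(b\triangleright a)\triangleright c-c\triangleright(a\triangleright b)+c\triangleright(b\triangleright a).
\]
We then sum over cyclic permutations of \((a,b,c)\) and regroup the twelve terms into associators. The cyclic sum equals
\[
\sum_{\mathrm{cyc}}\bigl(\operatorname{Ass}(a,b,c)-\operatorname{Ass}(b,a,c)\bigr).
\]
Concretely, each term \((x\triangleright y)\triangleright z\) pairs with a term \(-x\triangleright(y\triangleright z)\) that appears, after cyclic relabelling, in the expansion of the \(c\triangleright(\cdot)\) pieces. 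By \eqref{eq:left-symmetric-identity} every summand vanishes, so the Jacobi identity holds. The main (bookkeeping) obstacle is checking this regrouping: all twelve terms must be matched with the correct signs. I would do this by listing them explicitly.

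\emph{Anchor compatibility.} In many conventions the identity \(\rho([a,b]_A)=[\rho(a),\rho(b)]\) follows from Leibniz and Jacobi whenever the anchor's image sheaf is involved. To be safe, I would derive it directly. First expand \([a,[b,fc]_A]_A\) with the Leibniz rule, then use Jacobi together with the Leibniz rule applied to each term. Comparing the coefficients of \(c\) gives
\[
\bigl(\rho([a,b]_A)-[\rho(a),\rho(b)]\bigr)[f]\,c=0
\]
for all \(f\) and \(c\), and hence the homomorphism property. This completes the verification that \(\bigl(A,\rho,[\,\cdot,\cdot\,]_A\bigr)\) is a Lie algebroid.
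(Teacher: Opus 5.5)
Your proof is correct. The twelve terms of $\sum_{\mathrm{cyc}}[[a,b]_A,c]_A$ regroup exactly as $\sum_{\mathrm{cyc}}\bigl(\operatorname{Ass}(a,b,c)-\operatorname{Ass}(b,a,c)\bigr)$. Expanding the Jacobi identity for $(a,b,fc)$ with the Leibniz rule yields $\bigl(\rho([a,b]_A)-[\rho(a),\rho(b)]\bigr)[f]\,c=0$, and this forces the homomorphism property because $c$ can be chosen nonvanishing at any given point.

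The paper gives no proof of this proposition; it quotes it from Liu, Sheng, Bai, and Chen, and your argument is the standard one behind that citation.
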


Now we construct the product $\triangleright$ for the left-symmetric algebroid. We regard \(V\) as an affine manifold equipped with its canonical
flat and torsion-free connection \(D\). We use the same notation
for the induced flat connections on all trivial tensor bundles over
\(V\).  We seek a lift satisfying
\[
\rho(a\triangleright b)
=
D_{\rho(a)}\rho(b).
\]
Since
\[
\rho(b)
=
b(\underbrace{\E,\ldots,\E}_{k\text{ times}}),
\]
by the Leibniz rule for the covariant derivative of tensor contractions,
\[
\begin{aligned}
D_{\rho(a)}\rho(b)
={}&
(D_{\rho(a)}b)(\E,\ldots,\E)
\\
&+
\sum_{i=1}^k
b(\E,\ldots,
\underbrace{D_{\rho(a)}\E}_{i\text{-th entry}},
\ldots,\E).
\end{aligned}
\]
The Euler vector field satisfies
\[
D_{\rho(a)}\E=\rho(a).
\]

For every \(a\in\Gamma(A)\), define an endomorphism field
\[
\Lambda_a
\in
C^\infty\bigl(V,\operatorname{End}(V)\bigr)
\]
by
\begin{equation}
\Lambda_a(u)
=
a(\underbrace{\E,\ldots,\E}_{k-1\text{ times}},u).
\label{eq:Lambda}
\end{equation}
Then
\begin{equation}\label{eq:Lambda-anchor}
\Lambda_a\E
=
\rho(a)
=
D_{\rho(a)}\E.    
\end{equation}

This suggests defining
\begin{equation}\label{eq:left-symmetric-product-expanded}
\begin{aligned}
&(a\triangleright b)(u_1,\ldots,u_k)
\\
={}&
(D_{\rho(a)}b)(u_1,\ldots,u_k)
\\
&+
\sum_{i=1}^k
b(u_1,\ldots,u_{i-1},
\Lambda_a u_i,
u_{i+1},\ldots,u_k).
\end{aligned}    
\end{equation}

By construction,
\begin{equation}
\rho(a\triangleright b)
=
D_{\rho(a)}\rho(b),
\label{eq:rho-product-identity}
\end{equation}

We also write
\begin{equation}
\nabla_ab:=a\triangleright b.
\label{eq:nabla-product}
\end{equation}

For an endomorphism field
\[
\Phi\in C^\infty\bigl(V,\operatorname{End}(V)\bigr),
\]
define
\[
\mathcal P_\Phi\colon\Gamma(A)\longrightarrow\Gamma(A)
\]
by
\begin{equation}
(\mathcal P_\Phi b)(u_1,\ldots,u_k)
=
\sum_{i=1}^k
b(u_1,\ldots,u_{i-1},
\Phi u_i,
u_{i+1},\ldots,u_k).
\label{eq:P-Phi}
\end{equation}
With this notation,
\[
a\triangleright b
=
D_{\rho(a)}b+\mathcal P_{\Lambda_a}b.
\]

The main result is
\begin{thm}\label{thm:Main}
The product
\[
a\triangleright b
=
D_{\rho(a)}b+\mathcal P_{\Lambda_a}b
\]
defines a left-symmetric algebroid structure on the anchored vector
bundle \(A\to V\).
\end{thm}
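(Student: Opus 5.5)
The plan is to check the two Leibniz rules directly and then reduce the left-symmetric identity \eqref{eq:left-symmetric-identity} to a flatness statement for the operator $\nabla_a=a\triangleright(\cdot)$ of \eqref{eq:nabla-product}. That flatness statement should follow from one structural identity for $\Lambda$.

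\emph{Leibniz rules.} Both rules should be immediate from the explicit form $a\triangleright b=D_{\rho(a)}b+\mathcal P_{\Lambda_a}b$. Since $\rho(fa)=f\rho(a)$ and $\Lambda_{fa}=f\Lambda_a$, the product is $C^\infty$-linear in $a$, which gives \eqref{eq:LSA-first-Leibniz}. Since $\mathcal P_{\Lambda_a}$ is $C^\infty$-linear and $D_{\rho(a)}(fb)=fD_{\rho(a)}b+\rho(a)[f]\,b$, we get \eqref{eq:LSA-second-Leibniz}.

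\emph{Reformulation.} By \eqref{eq:associator-definition}, $\operatorname{Ass}(a,b,c)=\nabla_{\nabla_ab}c-\nabla_a\nabla_bc$. Hence symmetry of $\operatorname{Ass}$ in $a,b$ is equivalent to
\[
\nabla_a\nabla_b-\nabla_b\nabla_a=\nabla_{a\triangleright b-b\triangleright a}.
\]
Write $X=\rho(a)$, $Y=\rho(b)$, $\Phi=\Lambda_a$, $\Psi=\Lambda_b$. The left-hand side is the commutator $[D_X+\mathcal P_\Phi,\,D_Y+\mathcal P_\Psi]$, which I would expand in three pieces.
\begin{itemize}
\item $[D_X,D_Y]=D_{[X,Y]}$, because $D$ is flat.
\item $[D_X,\mathcal P_\Psi]-[D_Y,\mathcal P_\Phi]=\mathcal P_{D_X\Psi-D_Y\Phi}$. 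This holds because $D$ is the trivial connection and $\mathcal P$ is built by pure tensor contraction, so $D_X(\mathcal P_\Psi b)=\mathcal P_{D_X\Psi}b+\mathcal P_\Psi D_Xb$.
\item $[\mathcal P_\Phi,\mathcal P_\Psi]=\mathcal P_{\Psi\Phi-\Phi\Psi}$. This is a direct check from \eqref{eq:P-Phi}: the cross terms, where $\Phi$ and $\Psi$ act in different slots, cancel in the commutator, and the same-slot terms give $b(\dots,\Psi\Phi u_i,\dots)$ minus $b(\dots,\Phi\Psi u_i,\dots)$.
\end{itemize}
On the right-hand side, \eqref{eq:rho-product-identity} together with torsion-freeness of $D$ gives $\rho(a\triangleright b-b\triangleright a)=D_XY-D_YX=[X,Y]$. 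So the $D$-parts match, and everything reduces to the identity
\[
\Lambda_{a\triangleright b}-\Lambda_{b\triangleright a}=D_X\Psi-D_Y\Phi+\Psi\Phi-\Phi\Psi .
\]

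\emph{Key lemma.} This reduced identity is where the actual content lies; the rest is bookkeeping. I would prove the stronger, unsymmetrized identity
\[
\Lambda_{a\triangleright b}=D_{\rho(a)}\Lambda_b+\Lambda_b\Lambda_a .
\]
Evaluate $(a\triangleright b)(\E,\dots,\E,u)$ using \eqref{eq:left-symmetric-product-expanded}. It has three contributions.
\begin{itemize}
\item The $D_{\rho(a)}b$ term gives $(D_{\rho(a)}b)(\E,\dots,\E,u)$.
\item The $k-1$ terms where $\Lambda_a$ acts on an $\E$-slot give, by \eqref{eq:Lambda-anchor}, $b(\E,\dots,\rho(a),\dots,\E,u)$.
\item The last slot gives $b(\E,\dots,\E,\Lambda_au)=\Lambda_b\Lambda_a u$.
\end{itemize}
On the other side, differentiating $\Lambda_b(u)=b(\E,\dots,\E,u)$ along $\rho(a)$ with $u$ held constant, using $D_{\rho(a)}\E=\rho(a)$, produces exactly the first two contributions. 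This proves the key lemma. Antisymmetrizing it in $a,b$ gives the reduced identity, and hence the left-symmetric identity.

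The step I expect to need the most care is the sign and ordering in $[\mathcal P_\Phi,\mathcal P_\Psi]$, and checking that it matches the order $\Lambda_b\Lambda_a$ in the key lemma. These are exactly the terms that must cancel, so I would verify them on $k=1$ first.
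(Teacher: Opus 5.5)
Your proposal is correct and follows the paper's proof essentially step for step. It uses the same Leibniz checks, the same reduction of left-symmetry to the flatness identity $[\nabla_a,\nabla_b]=\nabla_{[a,b]_A}$ via the three commutator formulas and torsion-freeness, and the same key identity $\Lambda_{a\triangleright b}=D_{\rho(a)}\Lambda_b+\Lambda_b\circ\Lambda_a$, proved by evaluating on $(\E,\ldots,\E,u)$ with $u$ constant. Your ordering $[\mathcal P_\Phi,\mathcal P_\Psi]=\mathcal P_{\Psi\Phi-\Phi\Psi}$ is also right and matches the order $\Lambda_b\circ\Lambda_a$ in that identity.
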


\subsection{A Leibniz-rule identity, and commutators of the tensorial operators}

We prepare the proof of Theorem \ref{thm:Main} with the following 2 lemmas.

\begin{lemma}
For all \(a,b\in\Gamma(A)\), we have
\begin{equation}
\Lambda_{a\triangleright b}
=
D_{\rho(a)}\Lambda_b
+
\Lambda_b\circ\Lambda_a.
\label{eq:Lambda-product-identity}
\end{equation}
\end{lemma}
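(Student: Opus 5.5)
Since both sides of \eqref{eq:Lambda-product-identity} are endomorphism fields, it suffices to evaluate them on an arbitrary constant vector \(u\in V\) at a point \(x\in V\). We compute \(\Lambda_{a\triangleright b}(u)=(a\triangleright b)(\E,\ldots,\E,u)\) directly from the defining formula \eqref{eq:left-symmetric-product-expanded}. Substituting \(u_1=\cdots=u_{k-1}=\E\) and \(u_k=u\) splits the right-hand side into three groups of terms.

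\[
\text{(i)}\;(D_{\rho(a)}b)(\E,\ldots,\E,u),\qquad
\text{(ii)}\;\sum_{i=1}^{k-1} b(\E,\ldots,\underbrace{\Lambda_a\E}_{i},\ldots,\E,u),\qquad
\text{(iii)}\;b(\E,\ldots,\E,\Lambda_a u).
\]

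Term (iii) is by definition \eqref{eq:Lambda} equal to \(\Lambda_b(\Lambda_a u)\), which is the second summand on the right of \eqref{eq:Lambda-product-identity}. For term (ii), we use \eqref{eq:Lambda-anchor}, namely \(\Lambda_a\E=\rho(a)=D_{\rho(a)}\E\). This rewrites (ii) as
\[
\sum_{i=1}^{k-1} b(\E,\ldots,D_{\rho(a)}\E,\ldots,\E,u).
\]

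Next we compute \(D_{\rho(a)}\Lambda_b\) applied to the constant vector \(u\). Note that \(\Lambda_b(u)=b(\E,\ldots,\E,u)\) is a contraction of the tensor field \(b\) with \(k-1\) copies of \(\E\) and the parallel field \(u\), and that \(D_{\rho(a)}u=0\). The Leibniz rule for covariant derivatives of contractions, already used just before \eqref{eq:Lambda}, then gives
\[
(D_{\rho(a)}\Lambda_b)(u)=(D_{\rho(a)}b)(\E,\ldots,\E,u)+\sum_{i=1}^{k-1} b(\E,\ldots,D_{\rho(a)}\E,\ldots,\E,u).
\]
This is exactly (i)+(ii). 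Adding (iii) yields \(\Lambda_{a\triangleright b}(u)=(D_{\rho(a)}\Lambda_b)(u)+\Lambda_b\Lambda_a u\), which proves the identity.

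The only point needing care is the bookkeeping in (ii). The sum in \(\mathcal P_{\Lambda_a}b\) runs over all \(k\) slots. When \(k-1\) slots carry \(\E\), those \(k-1\) terms produce precisely the \(\E\)-derivative terms of \(D_{\rho(a)}\Lambda_b\), and only the last slot produces the composition \(\Lambda_b\circ\Lambda_a\). One must also check that \(D_{\rho(a)}\Lambda_b\) means the covariant derivative of the endomorphism field, which for constant \(u\) is just the directional derivative of \(x\mapsto\Lambda_b(x)u\). This holds because \(D\) is the flat connection, so no extra terms arise. The case \(k=1\) is consistent with this argument: there (ii) is empty and \(\Lambda_b=b\).
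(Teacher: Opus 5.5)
Your proposal is correct and follows essentially the same route as the paper. Both arguments evaluate on a constant vector $u$ and split $(a\triangleright b)(\E,\ldots,\E,u)$ into the derivative term, the $k-1$ terms containing $\Lambda_a\E=\rho(a)$, and the last-slot term $\Lambda_b(\Lambda_a u)$, then match the first two against the Leibniz expansion of $D_{\rho(a)}(\Lambda_b u)$ using $D_{\rho(a)}u=0$. The only cosmetic difference is that the paper uses the symmetry of $b$ to collect the middle terms as $(k-1)\,b(\rho(a),\E,\ldots,\E,u)$, whereas you match them slot by slot, which does not require symmetry.
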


\begin{proof}

 Let \(u\in V\)
be regarded as a constant vector field. Then
\[
\begin{aligned}
\Lambda_{a\triangleright b}(u)
={}&
(a\triangleright b)
(\underbrace{\E,\ldots,\E}_{k-1\text{ times}},u)
\\
={}&
(D_{\rho(a)}b)
(\underbrace{\E,\ldots,\E}_{k-1\text{ times}},u)
\\
&+
\sum_{i=1}^{k-1}
b(\E,\ldots,
\underbrace{\Lambda_a\E}_{i\text{-th entry}},
\ldots,\E,u)
\\
&+
b(\underbrace{\E,\ldots,\E}_{k-1\text{ times}},
\Lambda_a u).
\end{aligned}
\]
Using \(\Lambda_a\E=\rho(a)\) and the symmetry of \(b\), this becomes
\begin{equation}
\begin{aligned}
\Lambda_{a\triangleright b}(u)
={}&
(D_{\rho(a)}b)
(\underbrace{\E,\ldots,\E}_{k-1\text{ times}},u)
\\
&+
(k-1)b\bigl(
\rho(a),
\underbrace{\E,\ldots,\E}_{k-2\text{ times}},
u
\bigr)
\\
&+
\Lambda_b(\Lambda_a u).
\end{aligned}
\label{eq:Lambda-product-first}
\end{equation}
When \(k=1\), the middle term is absent.

Since \(u\) is constant,
\[
D_{\rho(a)}u=0.
\]
Therefore
\[
\begin{aligned}
\bigl(D_{\rho(a)}\Lambda_b\bigr)(u)
={}&
D_{\rho(a)}\bigl(\Lambda_b(u)\bigr)
\\
={}&
D_{\rho(a)}
\bigl(
b(\underbrace{\E,\ldots,\E}_{k-1\text{ times}},u)
\bigr)
\\
={}&
(D_{\rho(a)}b)
(\underbrace{\E,\ldots,\E}_{k-1\text{ times}},u)
\\
&+
(k-1)b\bigl(
\rho(a),
\underbrace{\E,\ldots,\E}_{k-2\text{ times}},
u
\bigr).
\end{aligned}
\]
Again, the last term is absent when \(k=1\).
Substituting this into \eqref{eq:Lambda-product-first} yields
\[
\Lambda_{a\triangleright b}(u)
=
\bigl(D_{\rho(a)}\Lambda_b\bigr)(u)
+
(\Lambda_b\circ\Lambda_a)(u).
\]
Since this holds for every constant vector \(u\), we obtain
\eqref{eq:Lambda-product-identity}.
\end{proof}

For operators on \(\Gamma(A)\), we use the convention
\[
[\mathcal D_1,\mathcal D_2]
=
\mathcal D_1\circ\mathcal D_2
-
\mathcal D_2\circ\mathcal D_1.
\]
For endomorphism fields, composition is written in the usual order:
\[
(\Phi\circ\Psi)(u)=\Phi(\Psi(u)).
\]

\begin{lemma}
For all \(a,b\in\Gamma(A)\),
\begin{align}
[D_{\rho(a)},D_{\rho(b)}]
&=
D_{[\rho(a),\rho(b)]},
\label{eq:DD-commutator}
\\
[D_{\rho(a)},\mathcal P_{\Lambda_b}]
&=
\mathcal P_{D_{\rho(a)}\Lambda_b},
\label{eq:DP-commutator}
\\
[\mathcal P_{\Lambda_a},\mathcal P_{\Lambda_b}]
&=
\mathcal P_{
\Lambda_b\circ\Lambda_a
-
\Lambda_a\circ\Lambda_b
}.
\label{eq:PP-commutator}
\end{align}
\end{lemma}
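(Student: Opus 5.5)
We verify the three identities separately. In each case we evaluate both sides on an arbitrary section \(c\in\Gamma(A)\) and on constant vectors \(u_1,\ldots,u_k\in V\), viewed as parallel vector fields. Since \(D\) is the trivial connection, every constant \(u\) satisfies \(D_Xu=0\), and the components of \(c\) in the constant frame \(dx^{\odot\alpha}\otimes\partial_{x_j}\) are just functions. Hence \(D_X c\) is the componentwise directional derivative \(X[c]\).

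\textbf{Identity \eqref{eq:DD-commutator}.} This is the flatness of \(D\) on the trivial bundle \(A\). Componentwise we have \([D_X,D_Y]c=X[Y[c]]-Y[X[c]]=[X,Y][c]=D_{[X,Y]}c\). Taking \(X=\rho(a)\) and \(Y=\rho(b)\) gives the claim.

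\textbf{Identity \eqref{eq:DP-commutator}.} The operator \(\mathcal P_\Phi\) is \(C^\infty\)-linear in \(c\) and depends tensorially on \(\Phi\). We expand
\[
D_{\rho(a)}\bigl((\mathcal P_{\Lambda_b}c)(u_1,\ldots,u_k)\bigr)
=\sum_i D_{\rho(a)}\bigl(c(u_1,\ldots,\Lambda_bu_i,\ldots,u_k)\bigr)
\]
by the Leibniz rule for contractions. Because the \(u_j\) are constant, each summand splits into two pieces:
\[
(D_{\rho(a)}c)(u_1,\ldots,\Lambda_bu_i,\ldots,u_k)
\quad\text{and}\quad
c(u_1,\ldots,(D_{\rho(a)}\Lambda_b)u_i,\ldots,u_k),
\]
where we used \(D_{\rho(a)}(\Lambda_bu_i)=(D_{\rho(a)}\Lambda_b)u_i\) since \(u_i\) is constant. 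Summing over \(i\), the first pieces give \(\mathcal P_{\Lambda_b}D_{\rho(a)}c\) and the second pieces give \(\mathcal P_{D_{\rho(a)}\Lambda_b}c\). Subtracting \(\mathcal P_{\Lambda_b}D_{\rho(a)}c\) yields the identity.

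\textbf{Identity \eqref{eq:PP-commutator}.} Here we work pointwise, since everything is tensorial. We compute
\[
(\mathcal P_\Phi\mathcal P_\Psi c)(u_1,\ldots,u_k)=\sum_i(\mathcal P_\Psi c)(u_1,\ldots,\Phi u_i,\ldots,u_k).
\]
Expanding \(\mathcal P_\Psi\) gives a double sum over \(i\) and \(j\). The off-diagonal terms \(i\neq j\) have the form \(c(\ldots,\Phi u_i,\ldots,\Psi u_j,\ldots)\). The diagonal terms \(i=j\) have the form \(c(\ldots,\Psi\Phi u_i,\ldots)\), because \(\mathcal P_\Psi\) acts on the slot that already holds \(\Phi u_i\).

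In \(\mathcal P_\Psi\mathcal P_\Phi c\), the off-diagonal terms are the same set, since \(\Phi\) and \(\Psi\) act on distinct slots. So they cancel in the commutator. The diagonal terms are \(c(\ldots,\Phi\Psi u_i,\ldots)\). Hence
\[
[\mathcal P_\Phi,\mathcal P_\Psi]=\mathcal P_{\Psi\circ\Phi-\Phi\circ\Psi}.
\]
This reversal of the composition order is the only point that needs care: it is why the right side of \eqref{eq:PP-commutator} reads \(\Lambda_b\circ\Lambda_a-\Lambda_a\circ\Lambda_b\). Setting \(\Phi=\Lambda_a\) and \(\Psi=\Lambda_b\) gives the claim.

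We expect the main (mild) obstacle to be this bookkeeping of the composition order in \eqref{eq:PP-commutator}. The two derivative identities are standard consequences of \(D\) being flat and of constant vectors being parallel.
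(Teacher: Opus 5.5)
Your proposal is correct and follows essentially the same route as the paper: flatness of \(D\) for the first identity, the Leibniz rule evaluated on constant vectors for the second, and the cancellation of off-diagonal terms in the double sum for the third. You also track the reversed composition order in the third identity correctly, obtaining \(\mathcal P_{\Psi\circ\Phi-\Phi\circ\Psi}\) exactly as the paper does.
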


\begin{proof}
Identity \eqref{eq:DD-commutator} follows from the flatness of \(D\).

Let \(c\in\Gamma(A)\), and let
\(u_1,\ldots,u_k\in V\) be constant vector fields. By definition,
\[
(\mathcal P_{\Lambda_b}c)(u_1,\ldots,u_k)
=
\sum_{i=1}^k
c(u_1,\ldots,u_{i-1},
\Lambda_bu_i,
u_{i+1},\ldots,u_k).
\]
Differentiating along \(\rho(a)\), one obtains
\[
\begin{aligned}
&D_{\rho(a)}
\bigl(\mathcal P_{\Lambda_b}c\bigr)
(u_1,\ldots,u_k)
\\
={}&
\sum_{i=1}^k
(D_{\rho(a)}c)
(u_1,\ldots,u_{i-1},
\Lambda_bu_i,
u_{i+1},\ldots,u_k)
\\
&+
\sum_{i=1}^k
c(u_1,\ldots,u_{i-1},
(D_{\rho(a)}\Lambda_b)u_i,
u_{i+1},\ldots,u_k).
\end{aligned}
\]
Meanwhile,
\[
\begin{aligned}
&\mathcal P_{\Lambda_b}
\bigl(D_{\rho(a)}c\bigr)
(u_1,\ldots,u_k)
\\
={}&
\sum_{i=1}^k
(D_{\rho(a)}c)
(u_1,\ldots,u_{i-1},
\Lambda_bu_i,
u_{i+1},\ldots,u_k).
\end{aligned}
\]
Subtracting gives
\[
\begin{aligned}
&[D_{\rho(a)},\mathcal P_{\Lambda_b}]c
(u_1,\ldots,u_k)
\\
={}&
\sum_{i=1}^k
c(u_1,\ldots,u_{i-1},
(D_{\rho(a)}\Lambda_b)u_i,
u_{i+1},\ldots,u_k)
\\
={}&
\mathcal P_{D_{\rho(a)}\Lambda_b}c
(u_1,\ldots,u_k),
\end{aligned}
\]
which proves \eqref{eq:DP-commutator}.

For the last identity, expand
\[
\mathcal P_{\Lambda_a}
\mathcal P_{\Lambda_b}c
\]
and
\[
\mathcal P_{\Lambda_b}
\mathcal P_{\Lambda_a}c.
\]
The terms in which \(\Lambda_a\) and \(\Lambda_b\) act in different
tensor slots occur identically in the two expansions and cancel
after subtraction. The remaining terms are those in which the two
endomorphisms act in the same tensor slot. Hence
\[
\begin{aligned}
&[\mathcal P_{\Lambda_a},\mathcal P_{\Lambda_b}]c
(u_1,\ldots,u_k)
\\
={}&
\sum_{i=1}^k
c\bigl(
u_1,\ldots,u_{i-1},
(\Lambda_b\circ\Lambda_a
-
\Lambda_a\circ\Lambda_b)u_i,
u_{i+1},\ldots,u_k
\bigr)
\\
={}&
\mathcal P_{
\Lambda_b\circ\Lambda_a
-
\Lambda_a\circ\Lambda_b
}c(u_1,\ldots,u_k).
\end{aligned}
\]
This proves \eqref{eq:PP-commutator}.
\end{proof}

\subsection{Proof of Theorem \ref{thm:Main}}
\begin{proof}
We first verify the two Leibniz identities.

Let \(f\in C^\infty(V)\). Since
\[
\rho(fa)=f\rho(a),
\qquad
\Lambda_{fa}=f\Lambda_a,
\]
one has
\[
\begin{aligned}
(fa)\triangleright b
&=
D_{\rho(fa)}b+\mathcal P_{\Lambda_{fa}}b
\\
&=
D_{f\rho(a)}b+\mathcal P_{f\Lambda_a}b
\\
&=
fD_{\rho(a)}b+f\mathcal P_{\Lambda_a}b
\\
&=
f(a\triangleright b).
\end{aligned}
\]
Also,
\[
\begin{aligned}
a\triangleright(fb)
&=
D_{\rho(a)}(fb)+\mathcal P_{\Lambda_a}(fb)
\\
&=
\rho(a)[f]\,b
+
fD_{\rho(a)}b
+
f\mathcal P_{\Lambda_a}b
\\
&=
f(a\triangleright b)+\rho(a)[f]\,b.
\end{aligned}
\]
Thus \eqref{eq:LSA-first-Leibniz} and
\eqref{eq:LSA-second-Leibniz} hold.

Define the commutator
\begin{equation}
[a,b]_A
=
a\triangleright b-b\triangleright a.
\label{eq:subadjacent-bracket}
\end{equation}
Using \eqref{eq:rho-product-identity}, one obtains
\[
\begin{aligned}
\rho([a,b]_A)
&=
\rho(a\triangleright b)
-
\rho(b\triangleright a)
\\
&=
D_{\rho(a)}\rho(b)
-
D_{\rho(b)}\rho(a).
\end{aligned}
\]
Since \(D\) is torsion-free,
\[
D_{\rho(a)}\rho(b)
-
D_{\rho(b)}\rho(a)
=
[\rho(a),\rho(b)].
\]
Therefore
\begin{equation}
\rho([a,b]_A)
=
[\rho(a),\rho(b)].
\label{eq:anchor-bracket-compatibility}
\end{equation}

We next establish the flatness identity
\begin{equation}
[\nabla_a,\nabla_b]
=
\nabla_{[a,b]_A}.
\label{eq:flatness-identity}
\end{equation}
Recall that, as an operator on \(\Gamma(A)\), 
\[
\nabla_a
=
D_{\rho(a)}+\mathcal P_{\Lambda_a}.
\]
Therefore
\[
\begin{aligned}
[\nabla_a,\nabla_b]
={}&
[D_{\rho(a)}+\mathcal P_{\Lambda_a},
D_{\rho(b)}+\mathcal P_{\Lambda_b}]
\\
={}&
[D_{\rho(a)},D_{\rho(b)}]
+
[D_{\rho(a)},\mathcal P_{\Lambda_b}]
\\
&+
[\mathcal P_{\Lambda_a},D_{\rho(b)}]
+
[\mathcal P_{\Lambda_a},\mathcal P_{\Lambda_b}].
\end{aligned}
\]
Using
\[
[\mathcal P_{\Lambda_a},D_{\rho(b)}]
=
-[D_{\rho(b)},\mathcal P_{\Lambda_a}],
\]
together with
\eqref{eq:DD-commutator}--\eqref{eq:PP-commutator}, gives
\begin{align}
[\nabla_a,\nabla_b]
={}&
D_{[\rho(a),\rho(b)]}
\nonumber\\
&+
\mathcal P_{
D_{\rho(a)}\Lambda_b
-
D_{\rho(b)}\Lambda_a
+
\Lambda_b\circ\Lambda_a
-
\Lambda_a\circ\Lambda_b
}.
\label{eq:operator-commutator-expanded}
\end{align}

By \eqref{eq:Lambda-product-identity},
\[
\Lambda_{a\triangleright b}
=
D_{\rho(a)}\Lambda_b+\Lambda_b\circ\Lambda_a,
\]
and hence
\begin{align}
\Lambda_{[a,b]_A}
={}&
D_{\rho(a)}\Lambda_b
-
D_{\rho(b)}\Lambda_a
\nonumber\\
&+
\Lambda_b\circ\Lambda_a
-
\Lambda_a\circ\Lambda_b.
\label{eq:Lambda-bracket}
\end{align}
Combining \eqref{eq:anchor-bracket-compatibility},
\eqref{eq:operator-commutator-expanded}, and
\eqref{eq:Lambda-bracket}, we obtain
\[
\begin{aligned}
[\nabla_a,\nabla_b]
&=
D_{\rho([a,b]_A)}
+
\mathcal P_{\Lambda_{[a,b]_A}}
\\
&=
\nabla_{[a,b]_A}.
\end{aligned}
\]
This proves \eqref{eq:flatness-identity}.

It remains to verify the left-symmetric identity. By definition,
\[
\begin{aligned}
\operatorname{Ass}(a,b,c)
&=
(a\triangleright b)\triangleright c
-
a\triangleright(b\triangleright c)
\\
&=
\nabla_{\nabla_ab}c-\nabla_a\nabla_bc.
\end{aligned}
\]
Therefore
\[
\begin{aligned}
&\operatorname{Ass}(a,b,c)
-
\operatorname{Ass}(b,a,c)
\\
={}&
\nabla_{\nabla_ab}c
-
\nabla_a\nabla_bc
-
\nabla_{\nabla_ba}c
+
\nabla_b\nabla_ac
\\
={}&
\nabla_{\nabla_ab-\nabla_ba}c
-
[\nabla_a,\nabla_b]c
\\
={}&
\nabla_{[a,b]_A}c
-
[\nabla_a,\nabla_b]c.
\end{aligned}
\]
By the flatness identity \eqref{eq:flatness-identity}, the last
expression vanishes. Thus
\[
\operatorname{Ass}(a,b,c)
=
\operatorname{Ass}(b,a,c).
\]
Hence \((A,\triangleright,\rho)\) is a left-symmetric algebroid. This finishes the proof of Theorem \ref{thm:Main}.
\end{proof}

\subsection{The anchor image}

We now return to the singular foliation
\[
\mathcal F_k
=
\mathcal I_0^k\mathfrak X_V.
\]

\begin{propn}
The image of the anchor agrees with \(\mathcal F_k\). More precisely, for
every open subset \(U\subseteq V\),
\begin{equation}
\rho\bigl(\Gamma(U,A)\bigr)
=
\mathcal F_k(U),
\label{eq:sheaf-anchor-image}
\end{equation}
and
\begin{equation}
\rho\bigl(\Gamma_c(A)\bigr)
=
\mathcal F_k(V)\cap\mathfrak X_c(V).
\label{eq:compact-anchor-image}
\end{equation}
\end{propn}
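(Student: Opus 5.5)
The plan is to prove the two inclusions in \eqref{eq:sheaf-anchor-image} separately, and then deduce \eqref{eq:compact-anchor-image} by a cutoff argument.

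\emph{The inclusion $\rho(\Gamma(U,A))\subseteq\mathcal F_k(U)$.} Fix $a\in\Gamma(U,A)$. Expand it in the global frame $dx^{\odot\alpha}\otimes\partial/\partial x_j$ with $|\alpha|=k$, $1\le j\le n$, as
\[
a=\sum_{\alpha,j} f_{\alpha j}\, dx^{\odot\alpha}\otimes\frac{\partial}{\partial x_j},
\qquad f_{\alpha j}\in C^\infty(U).
\]
Since $\rho$ is $C^\infty$-linear and sends each frame element to $x^\alpha\partial/\partial x_j$, we get
\[
\rho(a)=\sum_{\alpha,j} f_{\alpha j}\,x^\alpha\frac{\partial}{\partial x_j}\in\mathcal I_0^k\mathfrak X_V(U).
\]

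\emph{The reverse inclusion.} Take $X\in\mathcal F_k(U)$. By the iterated Hadamard lemma recalled in the introduction, $\mathcal I_0^k(U)$ is generated by the monomials $x^\alpha$ with $|\alpha|=k$. Hence $X$ can be written as $\sum f_{\alpha j}x^\alpha\partial_j$ with $f_{\alpha j}\in C^\infty(U)$.

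The only subtlety is the meaning of \emph{generated}. A priori, elements of $\mathcal I_0^k\mathfrak X_V(U)$ are only locally finite sums of such expressions. I will handle this as follows.
\begin{itemize}
\item If $0\notin U$, the ideal is all of $C^\infty(U)$. Choose a partition of unity subordinate to a cover where each piece has such a representation.
\item In general, cover $U$ by open sets on which $X$ has a finite representation $\sum f^{(\beta)}_{\alpha j}x^\alpha\partial_j$.
\item Take a partition of unity $\{\chi_\beta\}$ subordinate to this cover and set $f_{\alpha j}=\sum_\beta\chi_\beta f^{(\beta)}_{\alpha j}$. This is a locally finite sum, hence smooth.
\end{itemize}
The section $a=\sum f_{\alpha j}\,dx^{\odot\alpha}\otimes\partial_j$ then satisfies $\rho(a)=X$. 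This gluing is possible precisely because $A$ is a trivial bundle of finite rank and $\rho$ is $C^\infty$-linear.

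\emph{The compactly supported statement \eqref{eq:compact-anchor-image}.}
\begin{itemize}
\item If $a\in\Gamma_c(A)$, then $\rho(a)$ is compactly supported and, by the first part, lies in $\mathcal F_k(V)$.
\item Conversely, let $X\in\mathcal F_k(V)$ have compact support $K$. By the first part, choose $a\in\Gamma(A)$ with $\rho(a)=X$.
\item Pick $\chi\in C^\infty_c(V)$ with $\chi\equiv1$ on a neighbourhood of $K$.
\item Then $\chi a\in\Gamma_c(A)$ and $\rho(\chi a)=\chi X=X$.
\end{itemize}

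The main obstacle is the precise form of the Hadamard step. One must check that $x^\alpha$ with $|\alpha|=k$ generate $\mathcal I_0^k$ on arbitrary $U$, not only near the origin. I would argue this by iterating Hadamard's lemma $g=\sum_i x_i g_i$ on a star-shaped neighbourhood of $0$. Away from $0$, some $x^\alpha$ is nonvanishing, so the monomials generate the unit ideal there. The partition of unity argument above then combines the two regions.
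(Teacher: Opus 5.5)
Your proposal is correct and follows essentially the same route as the paper: expansion in the constant frame $dx^{\odot\alpha}\otimes\partial/\partial x_j$ for the easy inclusion, local representations of $X$ glued by a partition of unity using $C^\infty$-linearity of $\rho$ for the reverse inclusion, and a cutoff function for the compactly supported statement. Your extra discussion of the Hadamard step (a star-shaped neighbourhood of the origin, and the unit ideal away from it) only spells out what the paper takes for granted from the introduction.
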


\begin{proof}
The constant sections
\[
dx^{\odot\alpha}\otimes \frac{\partial}{\partial x_j},
\qquad
|\alpha|=k,\quad 1\leq j\leq n,
\]
form a global frame of \(A\), and their anchor images are
\[
x^\alpha\frac{\partial}{\partial x_j}.
\]
These are precisely the global generators of
\(\mathcal I_0^k\mathfrak X_V\) as a sheaf of
\(C^\infty_V\)-modules.

We now verify that this gives equality on sections over every
open subset \(U\subseteq V\). The inclusion
\[
\rho\bigl(\Gamma(U,A)\bigr)\subseteq \mathcal F_k(U)
\]
is immediate. Conversely, let \(X\in\mathcal F_k(U)\). Since the vector
fields
\[
x^\alpha\frac{\partial}{\partial x_j},
\qquad
|\alpha|=k,\quad 1\leq j\leq n,
\]
generate \(\mathcal F_k\) as a sheaf, there is an open cover
\(\{U_\ell\}\) of \(U\) and sections
\(a_\ell\in\Gamma(U_\ell,A)\) such that
\[
\rho(a_\ell)=X|_{U_\ell}.
\]
Choose a locally finite partition of unity
\(\{\chi_\ell\}\) subordinate to \(\{U_\ell\}\). Then
\[
a=\sum_\ell \chi_\ell a_\ell\in\Gamma(U,A)
\]
is well defined and, since the anchor is \(C^\infty\)-linear,
\[
\rho(a)
=
\sum_\ell \chi_\ell\rho(a_\ell)
=
\sum_\ell \chi_\ell X
=
X.
\]
Hence, for every open subset \(U\subseteq V\),
\[
\rho\bigl(\Gamma(U,A)\bigr)=\mathcal F_k(U).
\]

The compactly supported statement follows by the usual cutoff argument.
Indeed, if
\[
X=\rho(a)\in\mathfrak X_c(V)
\]
for some \(a\in\Gamma(A)\), choose
\(\chi\in C^\infty_c(V)\) equal to \(1\) on a neighborhood of
\(\operatorname{supp}(X)\). Then
\[
\chi a\in\Gamma_c(A),
\qquad
\rho(\chi a)=\chi X=X.
\]
Thus
\[
\rho\bigl(\Gamma_c(A)\bigr)
=
\rho\bigl(\Gamma(A)\bigr)\cap\mathfrak X_c(V)
=
\mathcal F_k(V)\cap\mathfrak X_c(V).
\]
\end{proof}

\begin{cor}
For every \(n\geq1\) and \(k\geq1\), the singular foliation
\[
\mathcal F_k
=
\mathcal I_0^k\mathfrak X_{\R^n}
\]
is the anchor image of the finite-rank Lie algebroid
\[
A
=
\R^n\times
\bigl(S^k(\R^n)^*\otimes\R^n\bigr)
\longrightarrow\R^n,
\]
whose rank is
\[
n\binom{n+k-1}{k}.
\]
\end{cor}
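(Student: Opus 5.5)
The corollary is assembled from results already established in the paper, so the plan is to combine them rather than prove anything new.

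First, Theorem \ref{thm:Main} shows that the product
\[
a\triangleright b=D_{\rho(a)}b+\mathcal P_{\Lambda_a}b
\]
makes the anchored bundle \(A=\R^n\times(S^k(\R^n)^*\otimes\R^n)\), with anchor \(\rho(a)=a(\E,\ldots,\E)\), into a left-symmetric algebroid. This holds for every \(n\ge1\) and \(k\ge1\); no case distinction is needed, since the lemmas explicitly cover \(k=1\), where the middle terms drop out. By the Proposition on sub-adjacent brackets, \([a,b]_A=a\triangleright b-b\triangleright a\) then makes \((A,\rho,[\cdot,\cdot]_A)\) a Lie algebroid. For completeness I would recall why this Proposition holds, since its proof is only cited. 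The Leibniz rule for \([\cdot,\cdot]_A\) in the second slot follows from \eqref{eq:LSA-first-Leibniz} and \eqref{eq:LSA-second-Leibniz}. Skew-symmetry is immediate. The Jacobi identity comes from expanding the Jacobiator and grouping terms into \(\operatorname{Ass}(a,b,c)-\operatorname{Ass}(b,a,c)\) and its cyclic permutations, each of which vanishes by \eqref{eq:left-symmetric-identity}. Anchor compatibility was already obtained in \eqref{eq:anchor-bracket-compatibility}.

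Second, the preceding Proposition gives \(\rho(\Gamma(U,A))=\mathcal F_k(U)\) for every open \(U\), and also the corresponding statement for compact supports. So the image sheaf of the anchor is exactly \(\mathcal I_0^k\mathfrak X_{\R^n}\). This uses the iterated Hadamard lemma stated in the introduction, namely that the monomials \(x^\alpha\) with \(|\alpha|=k\) generate \(\mathcal I_0^k\).

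Finally, the rank count is \(\dim S^k(\R^n)^*\cdot\dim\R^n=\binom{n+k-1}{k}\cdot n\), since \(\dim S^k(\R^n)^*\) is the number of monomials of degree \(k\) in \(n\) variables.

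Nothing here is a genuine obstacle. The only point deserving care is that the equality of anchor image with \(\mathcal F_k\) is understood as an equality of sheaves (or of compactly supported sections), and the previous Proposition provides exactly that.
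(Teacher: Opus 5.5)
Your proposal is correct and matches how the paper obtains the corollary. Theorem \ref{thm:Main} together with the sub-adjacent Proposition gives the Lie algebroid $(A,\rho,[\cdot,\cdot]_A)$, the anchor-image Proposition gives $\rho(\Gamma(U,A))=\mathcal F_k(U)$, and the rank is $n\binom{n+k-1}{k}$; your additional sketch of the sub-adjacent Proposition (Leibniz rule from the two Leibniz axioms, Jacobiator equal to the cyclic sum of $\operatorname{Ass}(a,b,c)-\operatorname{Ass}(b,a,c)$) is also correct and fills in what the paper only cites.
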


\begin{remark}
The problem considered in this note is also related to recent work
of Lavau \cite{L} and of Lerman--Louis \cite{LL}. We hope that the construction presented
here provides new examples, and offers a complementary perspective, for their theories.
\end{remark}

\vspace{20pt}  

\noindent \textbf{Acknowledgments.} We thank Leonid Ryvkin for bringing this problem to our attention
during GAP 2026 at the Research Institute for Mathematical Sciences
(RIMS), Kyoto University. We are grateful to Ping Xu, Mathieu Sti\'{e}non, and Hsuan-Yi Liao  for fruitful discussions and useful comments.





\end{document}